\documentclass[12pt]{article}
%
%


\flushbottom \setlength{\parindent}{16pt}
\setlength{\parskip}{0cm} \setlength{\textheight}{195mm}
\setlength{\textwidth}{138mm}

\makeatletter

\topmargin      0mm \textwidth    160mm \textheight   195mm
\oddsidemargin  0mm
\evensidemargin 0mm
\usepackage{amssymb,amsmath,amsthm,latexsym}
\title{}
\author{}
\date{}

\begin{document}

\title{The isotopism problem of a class of 6-dimensional rank 2 semifields and its solution}
\author{M. Lavrauw, G. Marino, O. Polverino, R. Trombetti}

\maketitle

\newtheorem{theorem}{Theorem}[section]
\newtheorem{lemma}[theorem]{Lemma}
\newtheorem{conj}[theorem]{Conjecture}
\newtheorem{remark}[theorem]{Remark}
\newtheorem{cor}[theorem]{Corollary}
\newtheorem{prop}[theorem]{Proposition}
\newtheorem{defin}[theorem]{Definition}
\newtheorem{result}[theorem]{Result}
\newtheorem{property}[theorem]{Property}

\makeatother
\newcommand{\Prf}{\noindent{\bf Proof}.\quad }
\renewcommand{\labelenumi}{(\alph{enumi})}


\def\B{\mathbf B}
\def\C{\mathbf C}
\def\Z{\mathbf Z}
\def\Q{\mathbf Q}
\def\W{\mathbf W}
\def\a{\mathbf a}
\def\b{\mathbf b}
\def\c{\mathbf c}
\def\d{\mathbf d}
\def\e{\mathbf e}
\def\l{\mathbf l}
\def\v{\mathbf v}
\def\w{\mathbf w}
\def\x{\mathbf x}
\def\y{\mathbf y}
\def\z{\mathbf z}
\def\t{\mathbf t}
\def\cD{\mathcal D}
\def\cC{\mathcal C}
\def\cH{\mathcal H}
\def\cM{{\mathcal M}}
\def\cK{\mathcal K}
\def\cQ{\mathcal Q}
\def\cU{\mathcal U}
\def\cS{\mathcal S}
\def\cT{\mathcal T}
\def\cR{\mathcal R}
\def\cN{\mathcal N}
\def\cA{\mathcal A}
\def\cF{\mathcal F}
\def\cL{\mathcal L}
\def\cP{\mathcal P}
\def\cG{\mathcal G}
\def\cGD{\mathcal GD}

\def\PG{{\rm PG}}
\def\GF{{\rm GF}}

\def\Pg{PG(5,q)}
\def\pg{PG(3,q^2)}
\def\ppg{PG(3,q)}
\def\HH{{\cal H}(2,q^2)}
\def\F{\mathbb F}
\def\Ft{\mathbb F_{q^t}}
\def\P{\mathbb P}
\def\V{\mathbb V}
\def\S{\mathbb S}
\def\bS{\mathbb S}
\def\G{\mathbb G}
\def\E{\mathbb E}
\def\N{\mathbb N}
\def\K{\mathbb K}
\def\D{\mathbb D}
\def\ps@headings{
 \def\@oddhead{\footnotesize\rm\hfill\runningheadodd\hfill\thepage}
 \def\@evenhead{\footnotesize\rm\thepage\hfill\runningheadeven\hfill}
 \def\@oddfoot{}
 \def\@evenfoot{\@oddfoot}
}

\begin{abstract}
In \cite{De} three classes of rank
two presemifields of order $q^{2n}$, with $q$ and $n$ odd, were exhibited, leaving as an open problem the isotopy issue. In \cite{LaMaPoTr2013}, the authors faced with this problem answering the question whether these presemifields are new for $n>3$. In this paper we complete the study solving the case $n=3$.
\end{abstract}


\section{Introduction}

A finite {\it presemifield} $\S=(\S,+,\cdot)$ is a finite algebra
satisfying all the axioms for a {\it skewfield} except the
associativity for the multiplication and the existence of a
multiplicative identity.  If a presemifield admits an identity
element then it is called a {\it semifield}. Semfields have
received a lot of attention in recent years, and we refer
to \cite{LaPo2011} and \cite{Lavrauw2013} for references,
general theory, and definitions, if they are not included here.
A finite presemifield has order a power
of a prime $p$ and such a prime is called the {\it characteristic}
of $\S$. Two presemifields $\S=(\S,+,\cdot)$ and
$\S'=(\S',+,\circ)$ of characteristic $p$ are said to be {\it
isotopic} if there exist three $\F_p$-linear maps $g_1, g_2$ and
$g_3$ from $\S$ into $\S'$ such that $g_1(x\cdot y)=g_2(x) \circ
g_3(y)$ for all $x,y \in \S$. From the isotopy
point of view, presemifields are not more general than semifields;
in fact, by means of the so called Kaplansky trick, it is easy to
see that the isotopy class of a presemifield always contains a
semifield. Such
structures can be also defined for a presemifield as indicated
in \cite[Remark 2.2]{MaPoTr2011} and \cite{MaPo2012}. If two
(pre)semifields are isotopic their dimensions over the nuclei and over
the center are invariant, and we refer to them as the
{\it parameters} of the (pre)semifield. In \cite{De} the author introduced three families of rank $2$ presemifields of order $q^n$, with $q$ and $n$ odd, $2$--dimensional over their left nucleus and $2n$--dimensional over their center.  These presemifields are obtained
starting from a pair of bijective $\F_q$-linear maps of $\F_{q^n}$, satisfying suitable conditions \cite[Theorem 4.1]{De}, and they have been labeled ${\cal D}_A$, ${\cal D}_B$ and ${\cal D}_{AB}$ in \cite{LaMaPoTr2013}.
Also, in \cite[Theorem 4.3]{De}, the author determined their
parameters. This information was insufficient to address the
isotopy issue which, in fact, was leaved as an open problem.
In \cite{LaMaPoTr2013} this problem was solved for $n > 3$,
proving that  presemifields in the families $\cD_A$ and $\cD_{AB}$
are new, i.e. not isotopic to any previously
known semifield; whereas presemifields in the family $\cD_B$ are isotopic to
Generalized Twisted Fields for all $n \geq 3$. The remaining part
of the case $n=3$ is treated here separately; this mainly because
in the relevant case there are many more known examples in the
literature to compare with (\cite{Albert1961P},
\cite{Dickson1906}, \cite{EbMaPoTrComb2009},
\cite{EbMaPoTrEJC2009}, \cite{EbMaPoTrFFA2009},
\cite{JoMaPoTr2011}, \cite{JoMaPoTr2008}, \cite{JoMaPoTr2009}, \cite{Knuth1965},
\cite{LMPT}, \cite{MaPoTr2011}). For $n=3$ the Dempwolff presemifields ${\cal D}_A$ and ${\cal D}_{AB}$
are the algebraic structures ${\cal D}_A=(\F_{q^3}\times \F_{q^3},+,\star_A)$  and ${\cal D}_{AB}=(\F_{q^3}\times \F_{q^3},+,\star_{AB})$, $q$ odd, having multiplications defined as follows
$$(u,v)\star_A (x,y)=(u,v)\begin{pmatrix} x & y \\ A_{a,r}(y) & \xi A_{a,r}(x)
\end{pmatrix},$$
where $A_{a,r}(x)=x^{q^r}-ax^{q^{-r}}$, $r\in\{1,2\}$, such
that $\xi$ is a nonsquare in $\F_q$ and $a\in\F_{q^3}^*$ with $N_{q^3/q}(a) \neq 1$ $^(\footnote{$N_{q^3/q}$ denotes the norm function of $\F_{q^3}$ over $\F_q$}^)$, and
$$(u,v)\star_{AB}(x,y)=(u,v)\begin{pmatrix} x & y \\ A_{b^2,r}(y) & \xi B_{b,-r}(x)
\end{pmatrix},$$
where $A_{b^2,r}(x)=x^{q^r}-b^2x^{q^{-r}}$,
$B_{b,-r}(x)=2H_{b,-r}^{-1}(x)-x$, $r\in\{1,2\}$, such that
$H_{b,-r}(x)=x-bx^{q^{-r}}$ and with $\xi$ a nonsquare in $\F_q$ and
$b\in\F_{q^3}^*$ such that $N_{q^3/q}(b) \neq \pm 1$.

In this paper we prove that  if $N_{q^3/q}(a) =-1$ the presemifields ${\cD}_{A}$ belong, for each given value of $q$ odd, to a unique isotopy class which has been constructed in \cite{EbMaPoTrEJC2009} (Theorem \ref{thm:D_A-notnew}), whereas if $N_{q^3/q}(a) \notin \{-1,1\}$, there are new presemifields in the family $\cD_A$ for each $q$ odd (Theorem \ref{thm:D_A-new}). Regarding presemifields ${\cD}_{AB}$, we prove that they are always new (Theorem \ref{thm:D_AB-new}). In particular when $N_{q^3/q}(b^2) = -1$ presemifields of this sort belong to the class ${\cal F}_3$ introduced in \cite{MaPoTr2007}. The relevant ${\cD}_{AB}$ presemifields provide the first infinite family in such a class whose associated linear set satisfies certain geometric properties; this answers a question posed in \cite{EbMaPoTrEJC2009}. Finally, we prove that presemifields ${\cD}_{A}$ and ${\cD}_{AB}$ are also new up to the Knuth operations and the translation dual operation (Theorem \ref{rem}).

\section{Rank $2$ semifields of order $q^6$ and linear sets}

A pointset  $L$ of a projective space
$\Lambda=PG(r-1,q^n)=PG(V)$, $V=V(r,q^n)$ ($q=p^h$, $p$ prime)  is said
to be an {\em $\F_q$--linear} set of $\Lambda$ of {\it rank} $k$ if it is defined by
the non--zero vectors of a $k$--dimensional $\F_q$--vector subspace $U$ of $V$,
i.e., $$ L=L_U=\{\langle {\bf u}\rangle_{\F_{q^n}}: {\bf u}\in
U\setminus\{{\bf 0}\}\}.\quad\quad (\footnote{In what follows, in
spite of simplicity, we write $\langle {\bf u}\rangle$ to
denote the $\F_{q^n}$--vector space generated by $\bf u$.})$$

Let  $\Omega=PG(W,\F_{q^n})$ be a subspace of $\Lambda$ and let $L_U$
be an $\F_q$-linear set of $\Lambda$. Then $\Omega \cap L_U$ is an
$\F_q$--linear set of $\Omega$ defined by the $\F_q$--vector
subspace $U\cap W$ and, if $dim_{\F_q}(W\cap U)=i$, we say that
$\Omega$ has {\it weight $i$} in $L_U$ and denote this integer by $w_L(\Omega)$. (\footnote{For
further details on linear sets see \cite{Polverino2010} and \cite{LaVa2014}})

Following \cite{BL}, \cite{ML}, we call an $\F_q$--linear set $L_U$ of $\Lambda$ of rank $k$ {\em
scattered} if all of its points have weight 1.  In
\cite[Theorem 4.3]{BL}, it has been proven that a scattered
$\F_q$--linear set of $PG(r-1,q^n)$ has rank at most $rn/2$. A
scattered $\F_q$--linear set $L_U$ of $PG(r-1,q^n)$ of maximum rank
$rn/2$ is called a {\em maximum scattered} linear set. In this paper we will
use the structure of certain type of scattered subspace in the proofs, see below.
Another recent application of scattered subspaces can be found
in \cite{LaShZa2014}.

\bigskip

In \cite{LuMaPoTr-Sub}, generalizing results contained in
\cite{MaPoTr2007}, \cite{LMPT} and  \cite{LV}, a family of maximum
scattered linear sets, called of {\it pseudoregulus type}, is
introduced. Precisely, a scattered $\F_q$--linear set $L_U$ of
$\Lambda=PG(2h-1,q^n)$ of rank $hn$ ($h,n \geq 2$) is of {\it pseoudoregulus type} if
\begin{itemize}
\item[$(i)$] there exist $s_1,s_2,\dots,s_m$ pairwise disjoint lines of $\Lambda$, with $m=\frac{q^{nh}-1}{q^n-1}$, and each of them has weight $n$ in $L_U$;
\item[$(ii)$] there exist exactly
two $(h-1)$--dimensional subspaces $T_1$ and $T_2$ of $\Lambda$
disjoint from $L_U$ such that $T_j\cap s_i\neq \emptyset$ for each
$i=1,\dots,m$ and for each $j=1,2$.
\end{itemize}
The set of lines $\mathcal{P}_{L_U} = \{s_i \colon i=1,\dots,m\}$ is called
the {\it $\F_q$--pseudoregulus} (or simply {\it pseudoregulus}) of
$\Lambda$ associated with $L_U$ and  $T_1$ and $T_2$ are the  {\it
transversal spaces} of $\mathcal{P}_{L_U}$ (or {\it transversal spaces} of
$L_U$). Note that by \cite[Cor. 3.3]{LuMaPoTr-Sub}, if $n>2$ the pseudoregulus $\cP_{L_U}$ associated with $L_U$ and its transversal lines are uniquely determined.

\medskip

Here, we are interested in maximum scattered $\F_q$--linear sets of $\Lambda=PG(3,q^3)$. These linear sets have rank 6 and they are always of pseudoregulus type (see \cite{MaPoTr2007}). Also, the associated pseudoregulus consists of
$q^3+1$ lines, each of weight 3, and its transversal spaces are two disjoint
lines of $\Lambda$. By \cite[Sec. 2]{LMPT}, $\F_q$--linear sets of pseudoregulus type of $PG(3,q^3)$
can be characterized in the following way.

\begin{theorem}[]\label{thm:algebraicpseudoregulus}
Let $t_1=PG(U_1,\F_{q^3})$ and $t_2=PG(U_2,\F_{q^3})$ be two
disjoint lines of $\Lambda=PG(V,\F_{q^3})=PG(3,q^3)$ and let
$\Phi_f$ be a strictly semilinear collineation between $t_1$ and
$t_2$ having as a companion automorphism an element $\sigma\in
Aut(\F_{q^3})$ such that $Fix(\sigma)=\F_q$. Then, for each $\rho
\in \F_{q^3}^*$, the set
$$L_{\rho,f} = \{\langle \underbar{u}+\rho f(\underbar{u}) \rangle \,:\, \underbar{u}\in U_1\setminus\{\underbar 0\}\}$$
is an $\F_q$-linear set of $\Lambda$ of pseudoregulus type whose
associated pseudoregulus is $\mathcal{P}_{L_{\rho,f}}=\{\langle P,
P^{\Phi_f}\rangle\,:\, P\in t_1\}$, with transversal lines $t_1$
and $t_2$.

Conversely, each $\F_q$--linear set of pseudoregulus type of
$\Lambda=PG(3,q^3)$ can be obtained as described above.
\end{theorem}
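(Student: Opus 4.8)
The plan is to prove the two directions separately, working throughout in the underlying vector space $V=V(4,q^3)$, where $t_1=PG(U_1)$ and $t_2=PG(U_2)$ correspond to $2$-dimensional $\F_{q^3}$-subspaces with $V=U_1\oplus U_2$, and where $\Phi_f$ is induced by a $\sigma$-semilinear bijection $f\colon U_1\to U_2$ with $Fix(\sigma)=\F_q$. For the direct statement I would first observe that $W_\rho=\{u+\rho f(u)\colon u\in U_1\}$ is an $\F_q$-subspace of $V$: it is clearly additive, and since $f(\lambda u)=\lambda^\sigma f(u)$ it is stable under a scalar $\lambda$ only when $\lambda^\sigma=\lambda$, i.e. $\lambda\in\F_q$. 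As $u\mapsto u+\rho f(u)$ is $\F_q$-linear and injective (its kernel lies in $U_1\cap U_2=0$), $W_\rho$ has $\F_q$-dimension $6$, so $L_{\rho,f}=L_{W_\rho}$ has rank $6$. To see it is scattered, suppose $\lambda(u+\rho f(u))\in W_\rho$ for some $\lambda\in\F_{q^3}^*$; projecting onto $U_1$ and $U_2$ forces the preimage to be $\lambda u$ and then $\lambda\rho f(u)=\rho\lambda^\sigma f(u)$, whence $\lambda=\lambda^\sigma\in\F_q$, so every point has weight $1$.

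Next I would exhibit the pseudoregulus. For $P=\langle u\rangle\in t_1$ the line $s_P=\langle u,f(u)\rangle_{\F_{q^3}}=\langle P,P^{\Phi_f}\rangle$ meets $W_\rho$ exactly in $\{\mu u+\rho\mu^\sigma f(u)\colon\mu\in\F_{q^3}\}$, a $3$-dimensional $\F_q$-space, so $w_L(s_P)=3$; these $q^3+1$ lines are pairwise disjoint because distinct points of $t_1$ give $\F_{q^3}$-independent vectors $u$ and, $f$ being a bijection, $\F_{q^3}$-independent images $f(u)$. Finally $t_1$ and $t_2$ are disjoint from $L_{\rho,f}$ (a point of $L_{\rho,f}$ in $t_1$ would force $\rho f(u)\in U_1\cap U_2=0$), and each meets every $s_P$ in $P$, respectively $P^{\Phi_f}$; hence $t_1,t_2$ are the transversal lines and conditions $(i)$–$(ii)$ hold, proving $L_{\rho,f}$ is of pseudoregulus type with the asserted pseudoregulus.

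For the converse, let $L=L_U$ be of pseudoregulus type with transversal lines $t_1=PG(U_1)$, $t_2=PG(U_2)$ and put $V=U_1\oplus U_2$. Since $L$ is scattered and disjoint from $t_1$ and $t_2$, one has $U\cap U_1=U\cap U_2=0$, so by dimension the restriction to $U$ of the projection onto $U_1$ is an $\F_q$-isomorphism and $U=\{u+f(u)\colon u\in U_1\}$ is the graph of an $\F_q$-linear bijection $f\colon U_1\to U_2$. Each pseudoregulus line meets $t_1$ in a single point, and as the $q^3+1$ lines are disjoint this gives a bijection onto the points of $t_1$; imposing $w_L=3$ on the line through $\langle u\rangle$ shows $f(\langle u\rangle_{\F_{q^3}})\subseteq\langle f(u)\rangle_{\F_{q^3}}$ for every $u\in U_1\setminus\{0\}$. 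Hence for each $u$ there is an additive $\F_q$-linear bijection $\phi_u$ of $\F_{q^3}$ with $\phi_u(1)=1$ and $f(\mu u)=\phi_u(\mu)f(u)$.

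The crux is to upgrade these per-point maps to a single field automorphism. Applying the weight condition to $\langle u+u'\rangle$ for two distinct points and using the $\F_{q^3}$-independence of $f(u),f(u')$ forces $\phi_u=\phi_{u'}$, so $\phi:=\phi_u$ is independent of $u$. Multiplicativity is then immediate, since $f(\mu\nu u)=\phi(\mu)f(\nu u)=\phi(\mu)\phi(\nu)f(u)$ while also $f(\mu\nu u)=\phi(\mu\nu)f(u)$; thus $\phi\in Aut(\F_{q^3})$ and $f$ is $\phi$-semilinear. If $\phi=\mathrm{id}$ then $U$ would be an $\F_{q^3}$-subspace and $L$ an $\F_{q^3}$-line, contradicting rank $6$, so $\phi$ is a generator with $Fix(\phi)=\F_q$, $\Phi_f$ is strictly semilinear, and $L=L_{1,f}$ has the required form; the matching of its pseudoregulus and transversal lines with the given ones is then forced by the uniqueness in \cite[Cor. 3.3]{LuMaPoTr-Sub}. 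I expect the main obstacle to be precisely this extraction of the automorphism $\sigma$ from the weight-$3$ condition, the remaining arguments being bookkeeping in the direct sum $V=U_1\oplus U_2$.
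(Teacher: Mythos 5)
Since the paper imports this theorem from \cite[Sec.~2]{LMPT} without giving a proof, your proposal has to stand on its own, and it has one genuine gap, located in the direct implication. The definition of ``pseudoregulus type'' used in this paper requires in item $(ii)$ that there exist \emph{exactly} two lines disjoint from $L_{\rho,f}$ meeting every line $s_i$; you exhibit $t_1$ and $t_2$ but never exclude a third transversal, so condition $(ii)$ is not verified and the set has not been shown to be of pseudoregulus type (this exactness is also what makes the transversal pair an isotopy invariant, which is how the theorem gets used in Section~3). The gap is fillable: replace $f$ by $\rho f$ (still $\sigma$-semilinear, and $L_{\rho,f}=L_{1,\rho f}$), choose a basis $e_1,e_2$ of $U_1$ and coordinates with respect to $e_1,e_2,f(e_1),f(e_2)$, so that $s_{P_t}=\langle (1,t,0,0),(0,0,1,t^{\sigma})\rangle$ for $t\in\F_{q^3}$ and $s_{P_\infty}=\langle (0,1,0,0),(0,0,0,1)\rangle$. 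Any line $T$ meeting all the $s_i$ meets in particular the three pairwise disjoint lines $s_{P_0},s_{P_1},s_{P_\infty}$, which lie in one ruling of the quadric $X_0X_3-X_1X_2=0$; hence $T$ belongs to the opposite ruling, i.e. $T=T_s=\langle(1,0,s,0),(0,1,0,s)\rangle$ for some $s\in\F_{q^3}$, or $T=T_\infty=\langle(0,0,1,0),(0,0,0,1)\rangle$. Intersecting $T_s$ with $s_{P_t}$ for a fixed $t\in\F_{q^3}\setminus\F_q$ yields $\gamma ts=\gamma st^{\sigma}$ with $\gamma\neq0$, hence $t=t^{\sigma}$ unless $s\in\{0,\infty\}$; since $Fix(\sigma)=\F_q$ this is a contradiction, so $T\in\{T_0,T_\infty\}=\{t_1,t_2\}$, giving exactness.

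The rest of your argument is correct, and the converse in particular is the standard route: the graph decomposition $U=\{u+f(u):u\in U_1\}$, the pointwise maps $\phi_u$, their patching via $\langle u+u'\rangle$, and the multiplicativity computation all work. Three details should be made explicit there. First, the $\F_{q^3}$-independence of $f(u),f(u')$ that you invoke follows because $s_P\cap t_2=\langle f(u)\rangle$ and $s_{P'}\cap t_2=\langle f(u')\rangle$ are distinct points, the pseudoregulus lines being pairwise disjoint. Second, your patching argument only gives $\phi_u=\phi_{u'}$ for vectors spanning \emph{distinct} points, while the multiplicativity step $f(\mu\nu u)=\phi(\mu)f(\nu u)$ needs $\phi_{\nu u}=\phi_u$ for $\nu u$ spanning the \emph{same} point; this follows by comparing both with $\phi_{u''}$ for a third point $\langle u''\rangle\neq\langle u\rangle$, but it must be said. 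Third, when $\phi=\mathrm{id}$ the contradiction is not with ``rank $6$'' (a $2$-dimensional $\F_{q^3}$-subspace does have $\F_q$-rank $6$) but with scatteredness, which is part of the hypothesis: all points of the resulting line would have weight $3$ in $L$.
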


\medskip

\noindent The semifields that we study in this paper are 6-dimensional $\F_q$-algebras
having at least one nucleus of order $q^3$: {\it rank two semifields
of order $q^{6}$}. The Knuth orbit of such a semifield contains an
isotopism class $[\bS]$ whose left nucleus has size ${q^3}$ and, by means of the multiplicative structure properties,
with the semifield $\bS$ there is associated an $\F_q$-linear set
$L_{\bS}$ of rank $6$ of $PG(3,q^3)$, disjoint from a given hyperbolic quadric $\cQ$. The isotopy class $[\bS]$ corresponds to the
orbit of $L_{\bS}$ under the subgroup $\cG \leq {\mathrm{P\Gamma
O}}^+(4,q^3)$ fixing the reguli of $\cQ$. Furthermore, the image of
$L_{\bS}$ under an element of ${\mathrm{P\Gamma O}}^+(4,q^3)\setminus
\cG$, defines a (pre)semifield which is isotopic to the {\it
transpose} semifield $\bS^t$ of $\bS$ (for more details see
\cite{LaPo2011}).

Let $Tr_{q^3/q}$ denote the trace function of $\F_{q^3}$ over $\F_q$
and let $b(X,Y)$ be the bilinear form associated with $\cQ$. By
field reduction we can use the bilinear form $Tr_{q^3/q}(b(X,Y))$ to
obtain another $\F_q$-linear set, say $L_{\bS}^{\perp}$, of rank $6$
disjoint from $\cQ$. The linear set $L_{\bS}^\perp$ defines a rank
two semifield, say $\bS^{\perp}$, as well; such a semifield is
called the {\it translation dual} of $\bS$ (see e.g.
\cite{LuMaPoTr2008}, or \cite[Section 3]{LaPo2011}). This operation extends the {\it Knuth orbit}
and has been recently generalized in \cite{LuMaPoTr-Sub}.
Another recent extension of the Knuth orbit is contained in \cite{LaShPrep}, using the
so-called BEL-configurations (see \cite{BaEbLa2007}, \cite{Lavrauw2008}, and \cite{Lavrauw2011}).

We will refer to
$\bS^t$ and $\bS^{\perp}$ as the {\it rank two derivatives} of
$\bS$.

\medskip
 As described in \cite{MaPoTr2007}, there are six possible geometric configurations for the linear set $L_{\mathbb{S}}$ and the corresponding
classes of semifields are labeled ${\cal F}_i,$ for $i=0,1,\dots,5$. Semifields belonging to different classes are not isotopic.  Semifields contained in classes ${\cal F}_0$, ${\cal F}_1$ and ${\cal F}_2$ were classified in \cite{MaPoTr2007}.

\medskip

\subsection{Family ${\cal F}_3$}\label{sec:F_3}

\noindent The linear set $L_\bS$ associated with a semifield $\bS$
in class ${\cal F}_3$ has the following structure:
\begin{itemize}
\item [$i)$]  $L_\bS$ contains a unique point of weight greater
than 1 and such a point, say $P$, has weight 2;

\item [$ii)$] $L_\bS$ is not contained in a plane and there exists
a unique plane $\pi$ of $PG(3,q^3)$ of weight 5 in $L_\bS$. Also, since the
weights of $P$ and $\pi$ in $L_\bS$ are 2 and 5, respectively, and
since $L_\bS$ has rank 6, then $P$ is a point of $\pi$.

\item [$iii)$] Any plane of $PG(3,q^3)$, different from $\pi$, has
weight 3 or 4 and if $\pi\ne P^\perp$ (where $\perp$ denotes the
polarity induced by the quadric $\cQ$) then, by $i)$, the point
$\pi^\perp$ has weight 1 or 0, according to $\pi^\perp$ belongs to
$L_\bS$ or not.
\end{itemize}

Since $P$ and $\pi$ are the unique point and the unique plane of
$PG(3,q^3)$ of weight $2$ and 5 in $L_\bS$, respectively, and since the
elements of ${\cal G}$ commute with $\perp$, we have that the
weights of $P$, $\pi$, $P^\perp$ and $\pi^\perp$ in $L_\bS$ are
invariant under isotopisms. Hence, the definition given in
\cite[Section 3]{JoMaPoTr2011} makes sense: a semifield $\bS$
belonging to the class $\cF_3$, with $\pi\ne P^\perp$, is {\it of
type $(i,j)$}, $i\in \{3,4\}$ and $j\in\{0,1\}$, if the weight of
$P^\perp$ in $L_\bS$ is $i$ and the weight of $\pi^\perp$ in
$L_\bS$ is $j$. By \cite[Theorem 3.2]{JoMaPoTr2011}, semifields in
class ${\cal F}_3$ of different types are not isotopic. The
Huang--Johnson semifields $\S_{II}$, $\S_{III}$,
$\S_{IV}$ and $\S_{V}$ (\cite{HuJo1990}) of order $2^6$ belong to the class $\cF_3$. Precisely,
$\S_{II}$ and $\S_{III}$ are of type $(4,1)$, whereas $\S_{IV}$
and $\S_{V}$ are of type $(3,0)$ (see \cite[Prop.
3.5]{JoMaPoTr2011}). Also in \cite{JoMaPoTr2011}, Huang--Johnson
semifields $\S_{II}$ and $\S_{III}$ are extended to new infinite
families of semifields of order $q^6$, existing for every $q$, and all of them are of type $(4,1)$. So far, they are the only known
infinite families of presemifields belonging to the class $\cF_3$.

\medskip

\subsection{Family ${\cal F}_4$}\label{sec:F_4}

\noindent Semifields in the class ${\cal F}_4$ have associated
$\F_q$-linear sets containing a line, say $\ell$, of $PG(3,q^3)$.
This family was further partitioned in \cite{JoMaPoTr2008} into
three subclasses, denoted  ${\cal F}_4^{(a)}$, ${\cal F}_4^{(b)}$
and ${\cal F}_4^{(c)}$. This partition again concerns with the
geometric structure of the associated linear set; precisely, a
semifield $\mathbb{S}$ belongs to the subclass  ${\cal
F}_4^{(a)}$, ${\cal F}_4^{(b)}$ or ${\cal F}_4^{(c)}$ if the polar
line ${\ell}^{\perp}$ of $\ell$ intersects the linear set in $0$,
$1$ or $q+1$ points, respectively. There exist semifields
belonging to family ${\cal F}_4^{(c)}$ for any value of $q$
(\cite{JhJo1992}, \cite{JoMaPoTr2008}, \cite{EbMaPoTrComb2009}),
whereas only two semifields, up to isotopisms, of order $3^6$ are
known to belong to the family ${\cal F}_4^{(b)}$
(\cite{EbMaPoTrFFA2009}). In \cite{EbMaPoTrEJC2009} and \cite{EbMaPoTrFFA2009} it was proven
that, for any $q$ odd, there exists, up to isotopisms, a unique
example in the subclass ${\cal F}_4^{(a)}$, having right nucleus of order $q^2$ and middle nucleus and
center both of order $q$. A representative presemifield of such an
isotopy class is the algebraic structure $(\F_{q^6},+,\star)$ with
multiplication
\begin{equation}\label{form:EMPT}
x\star y = (\alpha+ \beta u + \gamma u^2)x + \gamma b x^{q^3},
\end{equation}
where $u$ and $b$ are given elements of $\F_{q^3}\setminus\F_q$
and $\F_{q^6}$, respectively, such that $u^3=\sigma u+1$,
$\sigma\in\F_q^*$, $b^{q^3+1}=\sigma^2+9u+3\sigma u^2$ and where
$\alpha,\beta,\gamma \in \F_{q^2}$ are uniquely determined
so that $y=\alpha + \beta u+ \gamma (b+u^2)$. It must be
emphasized that semifields in the class ${\cal F}_4^{(a)}$ with either right or
middle nucleus of order $q^2$ are unique up to isotopism; precisely, for each value of $q$, they are isotopic to the presemifield with multiplication (\ref{form:EMPT}) or to its transpose, respectively. Moreover, no such semifields exist when $q$ is even. Also, there could be examples of semifields in class $\cF_4^{(a)}$ for which both
right and middle nuclei have size $q$, although no such examples
have yet been found.

\medskip

\subsection{Family ${\cal F}_5$}\label{sec:F_5}

\noindent Semifields belonging to the family ${\cal F}_5$ are
called of {\em scattered} type; this because the
$\mathbb{F}_q$-linear set associated with such a semifield is
scattered, i.e. it has all points of weight $1$. To any such a
linear set $L$ is associated a geometric object ${\cal P}_L$
called {\it $\mathbb{F}_q$--pseudoregulus}, consisting of $q^3+1$
mutually disjoint lines of $PG(3,q^3)$, each of them intersecting
$L$ in $q^2+q+1$ points, with exactly two transversal lines. From
\cite[Corollary 3.3]{LuMaPoTr-2014}, the pseudoregulus ${\cal
P}_L$ associated with $L$ and its transversal lines are uniquely
determined; hence their position with respect to the quadric $\cQ$
is an isotopism invariant. The known examples of (pre)semifields
belonging to this family are:
\begin{itemize}
\item [$1.$] some Knuth semifields \cite{Knuth1965}: the
associated linear sets have corresponding $\F_q$--pseudo\-regulus
with transversal lines both contained in the quadric $\cQ$
(\cite[Property 4.7]{MaPoTr2007}); \item [$2.$] some Generalized
Twisted Fields \cite{Albert1960}: the associated linear sets have
corresponding $\F_q$--pseudoregulus with transversal lines both
external to the quadric $\cQ$ and pairwise polar with respect to
the polarity defined by $\cQ$ (\cite[Property 4.8]{MaPoTr2007});
\item [$3.$] presemifields constructed in \cite{MaPoTr2011}: the
associated linear sets have corresponding $\F_q$--pseudoregulus
with one transversal line contained in the quadric $\cQ$ and the
other one external to it (\cite[Theorems 4.3, 3.5]{MaPoTr2011});
\item [$4.$] presemifields constructed in \cite{LMPT}: the
associated linear sets have corresponding $\F_q$--pseudoregulus
with transversal lines $t_1$ and $t_2$ both external to the
quadric $\cQ$ and not pairwise polar. Moreover, in these cases,
$t_1^\perp \cap t_2 = \emptyset$ (\cite[Section 5]{LMPT}).
\end{itemize}

In \cite[Section 3]{LMPT}, a canonical form for presemifields in family $\cF_5$ whose associated $\F_q$--pseudoregulus has at least one of the transversals external to the quadric $\cQ$ has been determined. As we will show later, some $\cD_A$ presemifields belong to this class and so, in order to compare them with the
last examples of the previous list, we need to further investigate this class of presemifields.

\medskip

\noindent The above mentioned canonical form has been written studying the associated linear sets in the projective space $PG(V)$, where $V=\F_{q^6}\times\F_{q^6}$ is considered as a vector space over $\F_{q^3}$. These linear sets are disjoint from the hyperbolic quadric $\bar\cQ$ of $PG(V)$ with equation $X^{q^3+1}-Y^{q^3+1}=0$ and, using the action of the collineation group fixing the reguli of such a quadric, w.l.o.g. we may assume
 that one of the tranversals of the associated $\F_q$--pseudoregulus is the external line $\ell:=\{\langle ( y,0)\rangle~:~y \in \F_{q^6}^*\}$. In \cite[Theorem 3.3]{LMPT} a canonical form for such presemifields has been exhibited and they have been denoted by ${\mathbb{S}}(\lambda,\mu,\alpha,\beta,\sigma)$. Moreover, presemifields in such a family having right and middle nuclei of orders $q^2$ and $q$, respectively, belong to the subfamily ${\mathbb{S}}(\lambda,0,\alpha,0,\sigma)$. Precisely, by \cite[Theorem 4.1]{LMPT} and by \cite[Theorem 3.4]{MaPoTr2011}, these presemifields are isotopic to the algebraic structure $(\F_{q^6},+,\star)$, whose multiplication rule is given by
\begin{equation}\label{canonicalform}
x\star y = (\lambda y + \alpha y^{\sigma})x + yx^{q^3},
\end{equation}
where $\lambda,\alpha\in\F_{q^6}$, $\alpha\ne 0$, $\sigma\in\{q^2,q^4\}$ such that
$$N_{q^6/q^3}(y) \neq N_{q^6/q^3}(\lambda y +\alpha y^{\sigma})\quad\quad\mbox{for each $y\in \F_{q^6}^*$.}$$ Also,
the element $\lambda$ can be taken up to its norm, i.e., if
$N_{q^6/q^3}(\lambda)=N_{q^6/q^3}(\lambda')$, then the two presemifields ${\mathbb{S}}(\lambda,0,\alpha,0,\sigma)$ and ${\mathbb{S}}(\lambda',0,\alpha,0,\sigma)$ are isotopic (\cite[Theorem 3.3]{LMPT}).

\smallskip

\noindent The linear set associated with ${\mathbb{S}}(\lambda,0,\alpha,0,\sigma)$ in the projective space $PG(V)$ is
$$L(\lambda,0,\alpha,0,\sigma):= \{\langle (\lambda y+\alpha y^{\sigma}, y)\rangle  \colon
y\in \F_{q^6}^*\}.$$
It is of pseudoregulus type and, by \cite[Section 3]{LMPT}, the corresponding transversals are
\begin{displaymath}
\begin{array}{rl}
\ell & :=\{\langle ( y,0)\rangle~:~y \in \F_{q^6}^*\}\\
\ell_{\lambda} & :=\{\langle ( \lambda y,y)\rangle~:~y \in \F_{q^6}^*\}.
\end{array}
\end{displaymath}

\bigskip

In the following we will give a necessary condition assuring that two presemifields of type ${\mathbb{S}}(\lambda,0,\alpha,0,\sigma)$ are isotopic, which will be useful in the sequel.

\begin{lemma}\label{lm:NecessaryCond}
If two presemifields ${\mathbb{S}}(\lambda,0,\alpha,0,\sigma)$ and ${\mathbb{S}}(\bar{\lambda},0,\bar{\alpha},0,\bar{\sigma})$ are isotopic, then $N_{q^6/q^3}(\bar\lambda^{\tau})= N_{q^6/q^3}(\lambda)$, for some $\tau \in Aut(\F_{q^6})$.
\end{lemma}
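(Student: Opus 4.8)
The goal is to extract an isotopy invariant from the presemifield $\mathbb{S}(\lambda,0,\alpha,0,\sigma)$, namely the norm class $N_{q^6/q^3}(\lambda)$ up to an automorphism of $\F_{q^6}$. The natural strategy is to pass from the algebraic isotopy to the geometry: by the discussion preceding the lemma, the presemifield $\mathbb{S}(\lambda,0,\alpha,0,\sigma)$ is associated with the scattered $\F_q$-linear set $L(\lambda,0,\alpha,0,\sigma)$ of pseudoregulus type in $PG(V)=PG(3,q^3)$, whose transversal lines are $\ell$ and $\ell_\lambda$, and these transversals are uniquely determined by $\cite[\mbox{Cor. }3.3]{LuMaPoTr-Sub}$. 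An isotopy between the two presemifields corresponds to a collineation $\varphi$ in the subgroup $\cG\le {\mathrm{P\Gamma O}}^+(4,q^3)$ fixing the reguli of $\bar\cQ$ that maps one linear set to the other; since both linear sets have $\ell$ as one of their transversals (by the normalisation that places one transversal at $\{\langle(y,0)\rangle\}$), I expect $\varphi$ to permute the transversal lines $\{\ell,\ell_\lambda\}$ and $\{\ell,\ell_{\bar\lambda}\}$.

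First I would set up the correspondence between isotopisms and collineations precisely: an isotopism $g_1(x\star y)=g_2(x)\circ g_3(y)$ induces, via the field-reduction / linear-set dictionary, an element $\varphi\in\cG$ carrying $L(\bar\lambda,0,\bar\alpha,0,\bar\sigma)$ onto $L(\lambda,0,\alpha,0,\sigma)$. Because $\varphi$ preserves the pseudoregulus structure and the two transversals are canonically determined, $\varphi$ must send the unordered pair $\{\ell,\ell_{\bar\lambda}\}$ onto $\{\ell,\ell_\lambda\}$. Next I would write down the explicit form of the elements of $\cG$: since $\cG$ fixes the reguli of the hyperbolic quadric $\bar\cQ\colon X^{q^3+1}-Y^{q^3+1}=0$, each element acts diagonally (up to the field-automorphism twist by some $\tau\in Aut(\F_{q^6})$ and up to swapping the two families), i.e. on coordinates $(X,Y)$ by a map of the shape $(X,Y)\mapsto(aX^\tau,dY^\tau)$ or the regulus-swapping variant. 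The transversal line $\ell=\{\langle(y,0)\rangle\}$ is fixed as a set by the diagonal type and swapped to $\{\langle(0,y)\rangle\}$ by the other type; tracking where $\ell_{\bar\lambda}=\{\langle(\bar\lambda y,y)\rangle\}$ goes under such a map will produce an equation relating the slopes $\lambda$ and $\bar\lambda$.

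The heart of the computation is this transversal-chasing step. Applying a diagonal twisted map $(X,Y)\mapsto(aX^\tau,dY^\tau)$ to a generic point $\langle(\bar\lambda y,y)\rangle$ of $\ell_{\bar\lambda}$ yields $\langle(a\bar\lambda^\tau y^\tau, d y^\tau)\rangle=\langle((a/d)\bar\lambda^\tau z, z)\rangle$ with $z=d y^\tau$, so the image line has slope $(a/d)\bar\lambda^\tau$. For this to coincide with $\ell_\lambda$ I need $(a/d)\bar\lambda^\tau=\lambda$. Taking the relative norm $N_{q^6/q^3}$ of both sides and using that $N_{q^6/q^3}(a/d)=1$ — which should follow from the condition that $\varphi$ fixes (each regulus of) $\bar\cQ$, forcing $a^{q^3+1}=d^{q^3+1}$ and hence $N_{q^6/q^3}(a)=N_{q^6/q^3}(d)$ — gives exactly $N_{q^6/q^3}(\bar\lambda^\tau)=N_{q^6/q^3}(\lambda)$. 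The regulus-swapping case needs separate but entirely analogous bookkeeping, where $\ell$ maps to $\{\langle(0,y)\rangle\}$ and consequently $\ell_{\bar\lambda}$ must map to $\ell_\lambda$, producing a relation on the reciprocal slope; since $\lambda$ is already only determined up to its norm by $\cite[\mbox{Theorem }3.3]{LMPT}$, this case yields the same norm equality after absorbing the inversion into the choice of $\tau$.

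The main obstacle I anticipate is twofold: pinning down the exact admissible form of elements of $\cG$ (in particular justifying the constraint $N_{q^6/q^3}(a)=N_{q^6/q^3}(d)$ from "fixing the reguli," rather than merely stabilising the quadric), and correctly handling the case where $\bar\sigma\ne\sigma$ or where the regulus-swap occurs, so that in all cases the induced automorphism on slopes really is a field automorphism $\tau$ of $\F_{q^6}$ rather than a more general semilinear map. Once the structure of $\cG$ and the uniqueness of the transversals are in hand, the norm computation itself is short and forces the stated conclusion.
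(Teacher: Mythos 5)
Your overall strategy --- geometrize the isotopy as an element of $\cG$, invoke the uniqueness of the transversals to force $\{\ell,\ell_{\bar\lambda}\}\mapsto\{\ell,\ell_\lambda\}$, and then chase slopes --- is exactly the paper's, and your computation in the case $\varphi(\ell)=\ell$, $\varphi(\ell_{\bar\lambda})=\ell_\lambda$ is sound (including the observation that preservation of $\bar\cQ$ forces $a^{q^3+1}=d^{q^3+1}$ for a diagonal map). The gap lies in your structural description of $\cG$: it is \emph{not} true that every element of $\cG$ is of diagonal type $(X,Y)\mapsto(aX^\tau,dY^\tau)$ or a coordinate-swapping variant. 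As recalled in the paper from \cite[Section 3]{LMPT}, the elements of $\cG$ depend on four parameters $A,B,C,D\in\F_{q^6}$ (with $A^{q^3+1}\neq B^{q^3+1}$, $C^{q^3+1}\neq D^{q^3+1}$) together with $\tau$; for instance, taking $A=1$, $B=0$, $\tau=\mathrm{id}$ gives
$$(x,y)\ \mapsto\ (Cx+D^{q^3}y,\ Dx+C^{q^3}y),$$
which fixes both reguli of $\bar\cQ$ and yet maps $\ell$ onto the line $\ell_{C/D}$ when $D\neq 0$. So elements of $\cG$ can carry $\ell$ to \emph{any} line $\ell_\mu$; your two types only describe the stabiliser of the pair consisting of $\ell$ and $\{\langle(0,y)\rangle : y\in\F_{q^6}^*\}$, and even the legitimacy of restricting to diagonal maps in your first case (i.e., that the stabiliser of $\ell$ in $\cG$ contains nothing else) has to be derived from this four-parameter form rather than assumed.

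Concretely, your case analysis misses the genuine second case, $\varphi(\ell)=\ell_\lambda$ and $\varphi(\ell_{\bar\lambda})=\ell$, which is realized by such non-diagonal elements and occupies half of the paper's proof: there one substitutes the four-parameter form into these two conditions, deduces from $C^{q^3+1}\neq D^{q^3+1}$ that $A=0$ or $B=0$, and only then extracts $N_{q^6/q^3}(\bar\lambda^{\tau})= N_{q^6/q^3}(\lambda)$. The case you propose instead --- $\ell\mapsto\{\langle(0,y)\rangle\}$ with $\ell_{\bar\lambda}\mapsto\ell_\lambda$ --- cannot occur at all when $\lambda\neq 0$, since the image of the transversal pair would then contain a line outside $\{\ell,\ell_\lambda\}$, contradicting the very transversal-preservation you established. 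Finally, the remark that a reciprocal relation could be repaired by ``absorbing the inversion into the choice of $\tau$'' is unsound: $N_{q^6/q^3}(\lambda)^{-1}$ is in general not of the form $N_{q^6/q^3}(\lambda^{\tau})$, because inversion is not induced by any field automorphism. The lemma holds only because, in the correct second case, the computation happens to produce the direct equality rather than a reciprocal one --- and that must be verified by the explicit calculation, not waved through.
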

\begin{proof}
If ${\mathbb{S}}(\lambda,0,\alpha,0,\sigma)$ and ${\mathbb{S}}(\bar{\lambda},0,\bar{\alpha},0,\bar{\sigma})$ are isotopic then the associated linear sets are isomorphic under the action of the collineation group $\cG$ fixing the reguli of $\bar\cQ$. The elements of the group $\cG$ (whose linear part has been described in \cite[Section 3]{LMPT}) are
\begin{equation}\label{lcollineation}
\langle x,y\rangle \mapsto \langle AC^\tau x^\tau+BD^{\tau q^3}x^{\tau q^3}+AD^{^\tau q^3}y^\tau+BC^\tau y^{\tau q^3},AD^\tau x^\tau+BC^{\tau q^3}x^{\tau q^3}+AC^{\tau q^3}y^\tau+BD^\tau y^{\tau q^3}\rangle
\end{equation}
where $A,B,C$ and $D$ are elements of $\F_{q^6}$, with $A^{q^3+1} \neq B^{q^3+1}$ and $C^{q^3+1} \neq D^{q^3+1}$, and $\tau$ is an automorphism of $\F_{q^6}$. Since the transversals of the $\F_q$--pseudoregulus associated with such presemifields are uniquely determined (\cite[Corollary 3.3]{LuMaPoTr-2014}), the pair $\{\ell,\ell_{\bar\lambda}\}$ must be mapped, under the action of the group $\cG$, to the pair $\{\ell,\ell_{\lambda}\}$.

We first suppose that there exists an element $\phi$ of $\cG$ such that $\phi(\ell)=\ell$ and $\phi(\ell_{\bar\lambda})=\ell_\lambda$. In this case $\phi$ belongs to the stabilizer $\cG_\ell$ of $\ell$ in $\cG$, whose elements, by (\ref{lcollineation}), are
\begin{equation}\label{stabilizer}
\langle x,y\rangle \mapsto \langle BD^{\tau q^3}x^{\tau q^3},BD^\tau y^{\tau q^3}\rangle
\end{equation}
and
\begin{equation}\label{stabilizer1}
\langle x,y\rangle \mapsto \langle AC^\tau x^\tau,AC^{\tau q^3}y^\tau\rangle,
\end{equation}
where $A,B,C,D\in\F_{q^6}^*$ and $\tau$ is an automorphism of $\F_{q^6}$. In both cases, requiring $\phi(\ell_{\bar\lambda})=\ell_\lambda$, we get $N_{q^6/q^3}(\bar\lambda^{\tau})= N_{q^6/q^3}(\lambda)$.

Now, suppose there exists an element $\phi$ of $\cG$ such that
\begin{equation}\label{form1}
\phi(\ell)=\ell_{\lambda}
\end{equation} and
\begin{equation}\label{form2}
\phi(\ell_{\bar\lambda})=\ell.
\end{equation} By substituting $C^\tau$ by $C$ and $D^\tau$ by $D$, from (\ref{form1}), we get
\begin{equation}\label{form3}
A(C-\lambda D)=0,
\end{equation}
\begin{equation}\label{form4}
B(D^{q^3}-\lambda C^{q^3})=0
\end{equation}
and from (\ref{form2}) we have
\begin{equation}\label{form5}
A(D\bar\lambda^\tau+C^{q^3})=0,
\end{equation}
\begin{equation}\label{form6}
B(C^{q^3}\bar\lambda^{\tau q^3}+D)=0.
\end{equation}
If $A\ne 0$ and $B\ne 0$, combining (\ref{form3}) and (\ref{form4}) we get $C^{q^3+1}=D^{q^3+1}$, a contradiction. Hence, either $A=0$ or $B=0$. If $A=0$, by (\ref{form4}) and (\ref{form6}) we have
$$D^{q^3}-\lambda C^{q^3}=C^{q^3}\bar\lambda^{\tau q^3}+D=0,$$ which again implies $N_{q^6/q^3}(\bar\lambda^{\tau})= N_{q^6/q^3}(\lambda)$. The same condition  is obtained when $B=0$ and this completes the proof.
\end{proof}

\section{The 6-dimensional $\cD_{A}$ and $\cD_{AB}$ presemifields}

In this section we study the isotopy issue for the presemifields $\cD_A$ and $\cD_{AB}$ with dimension 6 over their center, by comparing them with the known examples mentioned in the previous section.

\bigskip

\subsection{$\cD_{A}$ presemifields}

A $\cD_A$ presemifield of order $q^6$, $q$ odd, is the algebraic structure $(\F_{q^3}\times\F_{q^3},+,\star_A)$, with multiplication
$$(u,v)\star_A(x,y)=(u,v)\begin{pmatrix} x & y \\ A_{a,r}(y) & \xi A_{a,r}(x)
\end{pmatrix},$$
where $A_{a,r}(x)=x^{q^r}-ax^{q^{-r}}$, $r\in\{1,2\}$, such
that $\xi$ is a nonsquare in $\F_q$ and $a\in\F_{q^3}^*$ with $N_{q^3/q}(a) \neq 1$. Note that, since $\cD_A$ has no zero divisors, for each $(x,y)\ne (0,0)$ the corresponding matrix $\begin{pmatrix} x & y \\ A_{a,r}(y) & \xi A_{a,r}(x)
\end{pmatrix}$ is invertible, i.e. $\xi x A_{a,r}(x)\ne y A_{a,r}(y)$.

\noindent The associated $\F_q$--linear set in $\P=PG(3,q^3)$
$$L_{\cD_A}=\{\langle (x,y,A_{a,r}(y),\xi A_{a,r}(x))\rangle\, : \, x,y \in \F_{q^3}, (x,y) \neq (0,0)\},$$
is disjoint from the hyperbolic quadric $\cQ=Q^+(3,q^3):\,X_0X_3-X_1X_2=0$.

\medskip

Also, by \cite[Theorem 4.3]{De}, a $\cD_A$ presemifield has left nucleus of size $q^3$, right nucleus of size $q^2$, middle nucleus and center both of size $q$.

\bigskip

\noindent In the following we prove that the presemifields $\cD_A$ are not new when
$N_{q^3/q}(a)=-1$ and, for each given $q$, all
of them belong to a unique isotopy class, whereas, if $N_{q^3/q}(a)\notin\{-1,1\}$, the family $\cD_A$ contains, for each $q\geq 5$ odd, new presemifields.

\begin{theorem}\label{thm:D_A-notnew}
If $N_{q^3/q}(a)=-1$, then presemifields $\cD_A$ are all isotopic
to the presemifield whose multiplication is given in
(\ref{form:EMPT}).
\end{theorem}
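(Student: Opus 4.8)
The plan is to place the $\F_q$--linear set $L_{\cD_A}$ in the subfamily $\cF_4^{(a)}$ and then quote the uniqueness recorded in Section~\ref{sec:F_4}: a semifield in $\cF_4^{(a)}$ with right nucleus of order $q^2$ is isotopic to the presemifield~(\ref{form:EMPT}). Since $\cD_A$ has right nucleus of size $q^2$ by \cite[Theorem 4.3]{De}, it suffices to exhibit a line $\ell$ contained in $L_{\cD_A}$ (so that $\cD_A\in\cF_4$) whose polar line $\ell^\perp$ with respect to $\cQ$ is disjoint from $L_{\cD_A}$ (so that the subfamily is $\cF_4^{(a)}$). I will carry out the computation for $r=1$, writing $A=A_{a,1}$; the case $r=2$ is analogous, obtained by interchanging the Frobenius exponents $q$ and $q^2$.

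First I would convert weights into kernels. A projective point $\langle(v_0,v_1,v_2,v_3)\rangle$ lies in $L_{\cD_A}$ with weight equal to $\dim_{\F_q}\{\lambda\in\F_{q^3}:A(\lambda v_1)=\lambda v_2,\ \xi A(\lambda v_0)=\lambda v_3\}$; for points with $v_0=v_3=0$ (resp. $v_1=v_2=0$) this collapses to an eigenvalue problem $A(\mu)=c\mu$ for the $\F_q$--linear map $A$, whose $\F_q$--eigenspace has dimension $3-\mathrm{rank}\,M(c)$, where $M(c)$ is the Dickson (autocirculant) matrix with first row $(-c,1,-a)$. A direct expansion gives
\[ \det M(c)=1-N_{q^3/q}(a)-N_{q^3/q}(c)-\left(ca^{q^2}+ac^{q}+a^{q}c^{q^2}\right). \]
The key observation is that $M(c)$ drops to rank $1$ exactly at $c=c_0:=a^{-q^2}$, and that this occurs \emph{precisely} when $N_{q^3/q}(a)=-1$: proportionality of the rows of $M(c_0)$ forces $-a^{q+1}=a^{-q^2}$, i.e. $N_{q^3/q}(a)=-1$. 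Thus under our hypothesis $A-c_0\,\mathrm{id}$ has a $2$--dimensional $\F_q$--kernel $E$. Setting $P_0=\langle(0,1,c_0,0)\rangle$ and $P_1=\langle(1,0,0,\xi c_0)\rangle$, both are weight-$2$ points, and a general point of $\ell=\langle P_0,P_1\rangle$ is $\langle(t,s,c_0 s,\xi c_0 t)\rangle$, which lies in $L_{\cD_A}$ iff some $\lambda\neq0$ satisfies $\lambda s,\lambda t\in E$, i.e. iff $s^{-1}E\cap t^{-1}E\neq\{0\}$; two $2$--dimensional subspaces of the $3$--dimensional space $\F_{q^3}$ always meet nontrivially, so $\ell\subseteq L_{\cD_A}$ and $\cD_A\in\cF_4$. (A weight count, using that the multiplicative stabiliser of $E$ in $\F_{q^3}^*$ is $\F_q^*$, shows $\ell$ has weight $4$ and carries exactly the $q+1$ weight-$2$ points of $L_{\cD_A}$, confirming the configuration is genuinely $\cF_4$.)

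Next I would identify the subfamily. With respect to the bilinear form of $\cQ:X_0X_3-X_1X_2=0$ one computes $\ell^\perp=\langle(0,1,-c_0,0),(1,0,0,-\xi c_0)\rangle$, so a point of $\ell^\perp$ lies in $L_{\cD_A}$ iff some $\lambda\neq0$ has $\lambda s,\lambda t\in E':=\ker(A+c_0\,\mathrm{id})$. Hence $\ell^\perp\cap L_{\cD_A}=\emptyset$ as soon as $E'=\{0\}$, that is, as soon as $-c_0$ is not an eigenvalue. Evaluating the determinant formula at $-c_0$, using $N_{q^3/q}(-c_0)=1$ and $(-c_0)a^{q^2}+a(-c_0)^{q}+a^{q}(-c_0)^{q^2}=-3$, gives $\det M(-c_0)=4\neq0$ since $q$ is odd. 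Therefore $E'=\{0\}$, $\ell^\perp$ is disjoint from $L_{\cD_A}$, and $\cD_A\in\cF_4^{(a)}$; together with the right nucleus being $q^2$, the uniqueness result yields the isotopy with~(\ref{form:EMPT}).

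The main obstacle is the pair of determinant evaluations that tie everything to the hypothesis: showing that $\mathrm{rank}\,M(c)$ collapses at $c_0$ exactly when $N_{q^3/q}(a)=-1$ (which creates the line and places $\cD_A$ in $\cF_4$), and that it does \emph{not} collapse at $-c_0$, where the resulting value $4$ crucially exploits $q$ odd (which forces the subfamily $\cF_4^{(a)}$ rather than $\cF_4^{(b)}$ or $\cF_4^{(c)}$). The remaining ingredients — the weight/kernel dictionary, the intersection-of-subspaces argument giving $\ell\subseteq L_{\cD_A}$, and the appeal to uniqueness — are routine.
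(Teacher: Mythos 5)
Your proposal is correct --- I checked the determinant identity $\det M(c)=1-N_{q^3/q}(a)-N_{q^3/q}(c)-(ca^{q^2}+ac^{q}+a^{q}c^{q^2})$, the rank-one collapse of $M(c_0)$ at $c_0=a^{-q^2}$ under $N_{q^3/q}(a)=-1$, and the evaluation $\det M(-c_0)=4$ --- and it shares the paper's overall skeleton (put $\cD_A$ into $\cF_4^{(a)}$, use the right nucleus of order $q^2$ from \cite[Theorem 4.3]{De}, then quote the uniqueness of the isotopy class of (\ref{form:EMPT})), but your two middle steps are carried out by a genuinely different route. The paper obtains the line contained in $L_{\cD_A}$ (together with its $q+1$ points of weight $2$) by citing \cite[Theorem 4.5 $(iii)$]{LaMaPoTr2013}, and it places $\cD_A$ in the subclass $\cF_4^{(a)}$ not geometrically but algebraically, from the sizes of the nuclei, via \cite[Theorem 3.5]{EbMaPoTrFFA2009}. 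You instead re-derive the line from first principles (the Dickson-matrix/eigenspace analysis of $A_{a,r}$, plus the observation that two $2$-dimensional $\F_q$-subspaces of $\F_{q^3}$ meet nontrivially) and you verify the defining geometric property of $\cF_4^{(a)}$ directly, computing $\ell^\perp$ and showing $\ell^\perp\cap L_{\cD_A}=\emptyset$ from $\det M(-c_0)=4\neq0$. What your version buys is a self-contained argument (modulo the final uniqueness theorem) with explicit coordinates for $\ell$ and $\ell^\perp$, making visible exactly where each hypothesis enters: $N_{q^3/q}(a)=-1$ creates the $2$-dimensional eigenspace, and oddness of $q$ keeps $4\neq0$. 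What the paper's version buys is brevity and the nuclei-based criterion for the subclass, which requires no computation at all. One harmless overstatement on your side: you claim the rank drop occurs \emph{exactly} at $c_0$ and \emph{precisely} when $N_{q^3/q}(a)=-1$, but only the implication that $N_{q^3/q}(a)=-1$ forces $\mathrm{rank}\,M(c_0)=1$ is needed (and is what your row-proportionality computation actually establishes), so nothing is lost.
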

\begin{proof}
Since $N_{q^3/q}(a)=-1$, by \cite[Theorem 4.5
$(iii)$]{LaMaPoTr2013}, the linear set $L_{\cD_A}$ is not
scattered. Moreover, it contains $q+1$ points of weight $2$ and
these points belong to a line which is contained in $L_{\cD_A}$
and hence, $\cD_A$ belongs to family ${\cal F}_4$. Also, since the
right nucleus has order $q^2$ and the middle nucleus and center
have both size $q$, applying \cite[Theorem 3.5]{EbMaPoTrFFA2009},
we can conclude that the presemifield $\cD_A$ belongs to the
family ${\cal F}_4^{(a)}$. Now, in (\cite[Theorem
2.7]{EbMaPoTrEJC2009}) the authors construct an infinite family of
odd order presemifields belonging to such a subclass and having
right nucleus of size $q^2$, also proving that, up to isotopy,
this family is unique (\cite[Corollary 3.2]{EbMaPoTrEJC2009}). So,
taking also into account \cite[Theorem 1.1]{MaPoTr2011}, the
presemifields $\cD_A$, with $N_{q^3/q}(a)=-1$, are all isotopic to
the presemifield whose multiplication is defined as in
(\ref{form:EMPT}).
\end{proof}

\begin{theorem}\label{thm:D_A-new}
If $N_{q^3/q}(a)\notin\{-1,1\}$, the family $\cD_A$ contains new presemifields.
\end{theorem}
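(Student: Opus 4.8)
The plan is to show that the presemifields $\cD_A$ with $N_{q^3/q}(a) \notin \{-1,1\}$ cannot coincide (up to isotopy) with any previously known semifield, and that among themselves they produce genuinely new examples for each $q \geq 5$ odd. First I would invoke \cite[Theorem 4.5]{LaMaPoTr2013} to establish that, in the range $N_{q^3/q}(a) \notin \{-1,1\}$, the associated linear set $L_{\cD_A}$ is scattered, so that $\cD_A$ falls into the family $\cF_5$ of scattered type rather than $\cF_4$. This is the decisive structural input: it immediately separates these presemifields from the $N_{q^3/q}(a)=-1$ case handled in Theorem~\ref{thm:D_A-notnew}, and it places $\cD_A$ in a family where the pseudoregulus $\cP_{L_{\cD_A}}$ and its two transversal lines are uniquely determined invariants (by \cite[Corollary 3.3]{LuMaPoTr-2014}).

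Next I would analyze the position of the two transversal lines of $\cP_{L_{\cD_A}}$ with respect to the quadric $\cQ=Q^+(3,q^3)$, since this configuration is an isotopism invariant distinguishing the four known subcases in the $\cF_5$ list. Using the explicit form $A_{a,r}(x)=x^{q^r}-ax^{q^{-r}}$ and the norm hypothesis, I expect the transversals to come out both external to $\cQ$ and \emph{not} pairwise polar, i.e. the configuration of item~4 in the $\cF_5$ list, corresponding to the presemifields of \cite{LMPT}. Having landed in that subfamily, I would then bring the presemifield to the canonical form $\bS(\lambda,0,\alpha,0,\sigma)$ of equation~(\ref{canonicalform}), reading off the relevant parameter $\lambda$ (and its norm $N_{q^6/q^3}(\lambda)$) as a function of $a$, $\xi$ and $r$. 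The point is that Lemma~\ref{lm:NecessaryCond} now gives a hard isotopy invariant: two such presemifields can be isotopic only if $N_{q^6/q^3}(\bar\lambda^\tau)=N_{q^6/q^3}(\lambda)$ for some $\tau\in \mathrm{Aut}(\F_{q^6})$.

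The core of the argument is then a counting comparison. I would compute how the norm invariant $N_{q^6/q^3}(\lambda)$ varies as $a$ ranges over the admissible elements of $\F_{q^3}^*$ with $N_{q^3/q}(a)\notin\{-1,1\}$, accounting for the finitely many Frobenius twists $\tau$ allowed by Lemma~\ref{lm:NecessaryCond} (there are at most six such automorphisms, so each isotopy class can absorb only a bounded number of parameter values). By estimating the number of distinct values of the invariant against the number of previously known scattered semifields with the matching nuclei parameters ($q^3,q^2,q,q$), I would conclude that for $q\geq 5$ odd the family $\cD_A$ strictly exceeds the known stock and hence contains presemifields isotopic to none of them. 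The main obstacle I anticipate is precisely this quantitative step: pinning down $N_{q^6/q^3}(\lambda)$ as an explicit function of $a$ and then controlling the fibers of the map $a\mapsto N_{q^6/q^3}(\lambda)$ modulo the automorphism action well enough to guarantee that at least one new class survives the comparison with \emph{all} known $\cF_5$ examples carrying these parameters. The geometric reduction to the canonical form is routine given the earlier theorems; the delicate part is the arithmetic of the norm invariant and the final cardinality estimate.
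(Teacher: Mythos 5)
Your outline reproduces the paper's own strategy step for step: scatteredness via \cite[Theorem 4.5]{LaMaPoTr2013}, hence membership in $\cF_5$; identification of the two transversals as both external to $\cQ$ and not pairwise polar; reduction to the canonical form ${\mathbb{S}}(\lambda,0,\alpha,0,\sigma)$ of (\ref{canonicalform}) using the nuclei parameters $(q^3,q^2,q,q)$; and comparison of norm invariants via Lemma~\ref{lm:NecessaryCond}. Structurally, this is exactly the paper's proof.

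The gap is in the quantitative endgame that you defer, and it is not merely a routine computation. The paper finds explicitly that after moving $L_{\cD_A}$ to canonical position the invariant is $\bar\lambda=\frac{N_{q^3/q}(a)-1}{N_{q^3/q}(a)+1}$, which lies in $\F_q$ and sweeps out $\F_q\setminus\{0,1,-1\}$ as $a$ varies. That $\bar\lambda\in\F_q$ is the linchpin: it forces $N_{q^6/q^3}(\bar\lambda^\tau)=\bar\lambda^{2\tau}\in\F_q$, which rules out the known example $\S_2$ outright (its invariant does not lie in $\F_q$) and leaves only $\S_1$, which exists only for $q\equiv 1\ (\mathrm{mod}\ 6)$ and contributes at most two isotopy classes. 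Against this, the number of pairwise non-isotopic $\cD_A$'s is at least $\frac{q-3}{2h}$ where $q=p^h$ (note also that the admissible automorphisms $\tau$ number $6h$, not ``at most six'' as you claim; after restriction to $\F_q$ this is what puts the $h$ in the denominator). Your conclusion ``for $q\geq 5$ odd the family strictly exceeds the known stock'' is then a pure cardinality statement, and it is false as such at $q=7$: there $\frac{q-3}{2h}=2$, exactly matching the upper bound of two classes of $\S_1$, so counting alone is inconclusive. The paper closes this case by explicit arithmetic: for $q=7$ the invariant of $\S_1$ is $\frac{1}{2(1-\eta)}$ with $\eta\in\{3,5\}$, which is a nonsquare in $\F_7$, whereas $\bar\lambda^{2\tau}$ is always a nonzero square different from $1$. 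So your skeleton is correct, but without the explicit formula for $\bar\lambda$ and the special treatment of $q=7$ the argument does not close.
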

\begin{proof}
If $N_{q^3/q}(a)\notin\{-1,1\}$, by \cite[Theorem 4.5
$(i)$]{LaMaPoTr2013} the linear set $L_{\cD_A}$ is a maximum
scattered $\F_q$--linear set of $\P=PG(3,q^3)$ of pseudoregulus
type. Let $t_1$ and $t_2$ be the lines of $\P$ with equations
$$t_1:\ \left\{
\begin{array}{l}
X_2=-a^{q^r+1}X_1\\
X_3=-\xi a^{q^r+1}X_0
\end{array}
\right.\quad\quad\quad t_2:\ \left\{
\begin{array}{l}
X_1=a^{q^{-r}}X_2\\
X_0=\frac{a^{q^{-r}}}{\xi}X_3.
\end{array}
\right.$$ Since $q$ is odd, $\xi$ is a nonsquare in $\F_q$ and
$N_{q^3/q}(a)\notin\{-1,1\}$, these two lines are disjoint, both external
to the quadric $\cQ$ and $t_1^\perp\cap t_2=\emptyset$.

\noindent Let $\Phi_f$ be the collineation between $t_1$ and $t_2$
induced by the strictly semilinear map
$$f:\ (x,y,-a^{q^r+1}y,-\xi a^{q^r+1}x)\mapsto\ (a^{q^{-r}}x^{q^r}, a^{q^{-r}}y^{q^r},y^{q^r},\xi x^{q^r}),$$ taking into account that $x^{q^{-2r}}=x^{q^r}$ for each $x\in\F_{q^3}$, we get
$$\{\langle (x,y,-a^{q^r+1}y,-\xi a^{q^r+1}x)+f((x,y,-a^{q^r+1}y,-\xi a^{q^r+1}x)) \rangle:\,x,y \in \F_{q^3}, (x,y) \neq (0,0)\}$$
$$= \{\langle (x+a^{q^{-r}}x^{q^r},y+a^{q^{-r}}y^{q^r},-a^{q^r+1}y+y^{q^r},-\xi a^{q^r+1}x+\xi x^{q^r}) \rangle \,:\,x,y \in \F_{q^3}, (x,y) \neq (0,0)\}$$
$$=\{\langle (z,w,A_{a,r}(w),\xi A_{a,r}(z))\rangle\, :\,z,w \in \F_{q^3}, (z,w) \neq (0,0)\}=L_{\cD_A}.$$
By Theorem \ref{thm:algebraicpseudoregulus}, the lines $t_1$ and $t_2$ are the two
transversals of the $\F_q$--pseudoregulus associated with
$L_{\cD_A}$.  Hence, if $N_{q^3/q}(a)\notin\{-1,1\}$, the presemifields $\cD_A$ belong to the family $\cF_5$ and the associated $\F_q$--pseudoreguli have both the
transversals external to the hyperbolic quadric $\cQ$. Then, by \cite[Theorem 3.2]{LMPT} they belong to the isotopy class of some
semifield ${\mathbb{S}}(\lambda,\mu,\alpha,\beta,\sigma)$. Moreover, since the  presemifields $\cD_A$ have right nucleus of size $q^2$, middle nucleus
and center both of size $q$, then, by \cite[Theorem 4.1]{LMPT} and \cite[Theorems 3.4
and 3.5]{MaPoTr2011}, they belong to the isotopism class of some
semifield ${\mathbb{S}}(\lambda,0,\alpha,0,\sigma)$, with
$\lambda,\alpha\in\F_{q^6}^*$, $N_{q^6/q^3}(\lambda)\ne 1$ and
$\sigma\in\{q^2,q^4\}$, for which a canonical form has been given in (\ref{canonicalform}). So, to compare the presemifields $\cD_A$ with the known examples of odd order presemifields of type ${\mathbb{S}}(\lambda,0,\alpha,0,\sigma)$ constructed in \cite[Section 5]{LMPT}, we will write the linear set $L_{\cD_A}$ in the relevant canonical form. In order to do this, we apply an
isomorphism $\varphi$ from $\P=PG(3,q^3)$ to $PG(V)$ such that the
quadric $\cQ:X_0X_3-X_1X_2=0$ of $\P$ is mapped onto the quadric
$\bar\cQ~:~X^{q^3+1}-Y^{q^3+1}=0$ of $PG(V)$, and then move, using the action of the group $\cG$ fixing the reguli of $\bar\cQ$, the transversal $t_1$ to $\ell$. Let
\begin{displaymath}
\varphi~:~\langle (x_0,x_1,x_2,x_3)\rangle\in\P ~\mapsto~\langle (x_0+x_3+(x_1+\xi x_2)\omega, x_0-x_3 + (x_1-\xi x_2)\omega)\rangle\in PG(V),
\end{displaymath}
where $\omega \in \F_{q^2}\setminus \F_{q}$, with $\omega^2=1/\xi$, and let $\Psi$ be the collineation of $\cal G$ induced in $PG(V)$ by the nonsingular
matrix
\begin{displaymath}
\left (
\begin{array}{ll}
1-\xi a^{q^r+1} & -1-\xi a^{q^r+1}\\
-1-\xi a^{q^r+1} & 1-\xi a^{q^r+1}\\
\end{array}
\right ),
\end{displaymath}
we have
\begin{displaymath}
\begin{array}{rl}
t_1^{\varphi \Psi}& =\{ \langle (y,0) \rangle ~:~ y \in \F_{q^6}^*\}=\ell,\\
t_2^{\varphi \Psi} & =\{ \langle (2\xi(1-N_{q^3/q}(a))y,-2\xi (1+N_{q^3/q}(a))y) \rangle ~:~ y \in \F_{q^6}^*\},\\
 & =\{ \langle (\bar\lambda y,y) \rangle ~:~ y \in \F_{q^6}^*\}=\ell_{\bar \lambda},\\
\end{array}
\end{displaymath}
where
\begin{equation}\label{form:lambda}
\bar\lambda=\frac{N_{q^3/q}(a)-1}{N_{q^3/q}(a)+1}.
\end{equation}

\noindent Note that, as $a$ runs over $\F_{q^3}^*$ with the condition $N_{q^3/q}(a)\notin\{-1,1\}$ then $\bar\lambda$ covers $\F_q\setminus\{0,-1,1\}$.

Hence,  $L_{\cD_A}^{\varphi\psi}$  is an $\F_q$--scattered linear set of pseudoregulus type with transversals $\ell$ and $\ell_{\bar\lambda}$ and the corresponding presemifield isotopic to $\cD_A$ is of type ${\mathbb{S}}(\bar\lambda,0,\bar\alpha,0,\bar\sigma)$, for some $\bar\alpha\in\F_{q^6}^*$, $\bar\sigma\in\{q^2,q^4\}$ and with $\bar\lambda$ as in (\ref{form:lambda}).

So far, the only known examples of semifields of type ${\mathbb{S}}(\lambda,0,\alpha,0,\sigma)$ have been constructed in \cite[Section 5]{LMPT}. This means that, in order to prove that
the family $\cD_A$ contains new presemifields, it has to be compared with the examples of odd order
constructed in \cite{LMPT} and listed below:
\begin{itemize}
\item[$\S_1$:] ${\mathbb{S}}(\lambda_1,0,\lambda_1,0,q^2)$, $q\equiv 1(mod\,6)$, with $N_{q^6/q^3}(\lambda_1)^{-1}=2(1-\eta)$, where $\eta$ is a primitive
6-th root of unity over $\F_q$ (\cite[Theorem 5.3]{LMPT});
\item[$\S_2$:] ${\mathbb{S}}(\lambda_2,0,\lambda_2,0,q^2)$, $q\equiv 0(mod\,3)$, with $N_{q^6/q^3}(\lambda_2)^{-1}=-(u+u^2)$, where $u\in
\F_{q^3}\setminus \F_q$ such that $u^3=u+1$ (\cite[Theorem 5.8]{LMPT}).
\end{itemize}

\noindent Suppose that a presemifield $\cD_A$ (which is, up to isotopy, of type ${\mathbb{S}}(\bar\lambda,0,\bar\alpha,0,\bar\sigma)$) is isotopic to one of the presemifields ${\mathbb{S}}_i={\mathbb{S}}(\lambda_i,0,\lambda_i,0,q^2)$, $i\in\{1,2\}$. Then, by Lemma \ref{lm:NecessaryCond}, we get
\begin{equation}\label{form:equazione}
N_{q^6/q^3}(\bar\lambda^\tau)=N_{q^6/q^3}({\lambda_i}),
\end{equation}
for some automorphism $\tau$ of $\F_{q^6}$.
By (\ref{form:lambda}), $\bar \lambda\in \F_q$ and, by (\ref{form:equazione}), we get
$\bar\lambda^{2\tau}=N_{q^6/q^3}(\lambda_i)\in \F_q$. Since $N_{q^6/q^3}(\lambda_2)\notin \F_q$, a presemifield $\cD_A$ can only be isotopic to a presemifield $\S_1$. Now, we first observe that if $\eta$ is a primitive 6-th root of unity over $\F_q$, then $\eta^\tau$ is either $\eta$ or $\eta^{-1}$. Since $\eta$ and $\eta^{-1}$ are not necessarily conjugated under an automorphism of $\F_{q^6}$, from Lemma \ref{lm:NecessaryCond}, it follows
that there are, for each $q\equiv 1(mod\,6)$, at most 2 non--isotopic presemifields of type $\S_1$.

On the other hand if two presemifields $\cD_A$ of type ${\mathbb{S}}(\bar\lambda,0,\bar\alpha,0,\bar\sigma)$ and ${\mathbb{S}}(\bar\lambda',0,\bar\alpha',0,\bar\sigma')$, respectively, are isotopic, by Lemma \ref{lm:NecessaryCond}, we get $N_{q^6/q^3}(\bar\lambda^\tau)=N_{q^6/q^3}(\bar\lambda')$, i.e. by (\ref{form:lambda}) $\bar\lambda^{2\tau}=\bar\lambda'^2$, where $\bar\lambda,\bar\lambda'\in\F_q\setminus\{0,1,-1\}$ and $\tau$ can be considered as an automorphism of $\F_q$, with $q=p^h$. Since the number of nonzero squares of $\F_q$ different from 1 is $\frac {q-3}2$ and since each of them has at most $h$ conjugates in $\F_q$, the number of non--isotopic presemifields $\cD_A$, of given order $q^6$, is at least $\frac{q-3}{2h}$. Now, if $q>7$ then $\frac{q-3}{2h}>2$, i.e. for $q>7$, there are always presemifields in the family $\cD_A$ which are not isotopic to any previously known semifield. Finally, if $q=7$ and $\cD_A$ is isotopic to $\S_1$, then by (\ref{form:equazione}) $$\bar\lambda^{2\tau}=N_{q^6/q^3}(\lambda_1)=\frac 1{2(1-\eta)},$$ which means, since $N_{q^3/q}(a)\notin\{0,1,-1\}$ and taking (\ref{form:lambda}) into account, that $\frac 1{2(1-\eta)}$ is a nonzero square of $\F_q$ different from 1. But this is not the case since $\eta\in\{3,5\}$.

This completes the proof.
\end{proof}

\subsection*{$\cD_{AB}$ presemifields}

A $\cD_{AB}$ presemifield of order $q^6$, $q$ odd, is the
algebraic structure $(\F_{q^3}\times\F_{q^3},+,\star_{AB})$, with
multiplication
$$(u,v)\star_{AB}(x,y)=(u,v)\begin{pmatrix} x & y \\ A_{b^2,r}(y) & \xi B_{b,-r}(x)
\end{pmatrix},$$
where $A_{b^2,r}(y)=y^{q^r}-b^2y^{q^{-r}}$,
$B_{b,-r}(x)=2H_{b,-r}^{-1}(x)-x$, $r\in\{1,2\}$, such that
$H_{b,-r}(x)=x-bx^{q^{-r}}$ and with $\xi$ a nonsquare in $\F_q$ and
$b\in\F_{q^3}^*$ with $N_{q^3/q}(b) \neq \pm 1$. By \cite[Theorem
4.3]{De}, a $\cD_{AB}$ presemifield has left nucleus of size $q^3$,
right nucleus, middle nucleus and center all of size $q$. We can
prove the following

\begin{theorem}\label{thm:D_AB-new}
The family of presemifields $\cD_{AB}$ is new for each value of $q$. Also,
\begin{itemize}
\item[i)] if $N_{q^3/q}(b^2)=-1$ then $\cD_{AB}$ is contained in the class ${\cal F}_3$ and it is of type $(3,0)$;
\item[ii)] if $N_{q^3/q}(b^2)\neq -1$ then $\cD_{AB}$ is contained in the class ${\cal F}_5$ and the transversals $t_1$ and $t_2$ of the associated pseudoregulus satisfy the condition $|t_1^\perp\cap t_2|=1$.
\end{itemize}
\end{theorem}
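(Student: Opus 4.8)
The plan is to first determine the geometric type of the associated $\F_q$--linear set
$$L_{\cD_{AB}}=\{\langle (x,y,A_{b^2,r}(y),\xi B_{b,-r}(x))\rangle : x,y\in\F_{q^3},\ (x,y)\neq(0,0)\}$$
inside $PG(3,q^3)$, disjoint from $\cQ:X_0X_3-X_1X_2=0$, and then to deduce newness from the isotopism invariants carried by the classes $\cF_3$ and $\cF_5$. As in the treatment of $\cD_A$, the split is governed by whether $L_{\cD_{AB}}$ is scattered, which by the companion analysis in \cite{LaMaPoTr2013} is controlled by $N_{q^3/q}(b^2)$. The recurring technical nuisance is that $B_{b,-r}=2H_{b,-r}^{-1}-\mathrm{id}$ is defined only through the inverse of the $\F_q$--linear map $H_{b,-r}(x)=x-bx^{q^{-r}}$; so a preliminary step is to make $H_{b,-r}^{-1}$ explicit via the standard inversion formula for maps of the form $\mathrm{id}-b\,\phi$, with $\phi$ the order-$3$ Frobenius $x\mapsto x^{q^{-r}}$, before any weight or incidence is computed.

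For part $(ii)$, when $N_{q^3/q}(b^2)\neq -1$, I would show that $L_{\cD_{AB}}$ is maximum scattered of pseudoregulus type exactly along the lines used for $\cD_A$: exhibit two disjoint lines $t_1,t_2$ and a strictly semilinear map $f$ between them whose graph $\{\langle \underbar{u}+\rho f(\underbar{u})\rangle\}$ reproduces $L_{\cD_{AB}}$, and invoke Theorem \ref{thm:algebraicpseudoregulus} to conclude that $t_1,t_2$ are the (unique) transversals. The decisive computation is then $|t_1^\perp\cap t_2|$ for the polarity $\perp$ of $\cQ$, which I expect to yield a single $\F_{q^3}$--rational point, i.e.\ $|t_1^\perp\cap t_2|=1$. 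Since by \cite[Corollary 3.3]{LuMaPoTr-2014} the transversals and hence this count are isotopism invariants, the value $1$ already separates $\cD_{AB}$ from every known $\cF_5$ example: the Knuth transversals lie on $\cQ$ and are disjoint (count $0$), the generalized twisted field transversals are pairwise polar so $t_1^\perp=t_2$ (the whole line), the \cite{MaPoTr2011} family has one transversal on $\cQ$ (count $0$), and the \cite{LMPT} family has $t_1^\perp\cap t_2=\emptyset$ (count $0$). As none of these gives count $1$, the presemifields $\cD_{AB}$ with $N_{q^3/q}(b^2)\neq -1$ are new.

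For part $(i)$, when $N_{q^3/q}(b^2)=-1$, I would instead prove $L_{\cD_{AB}}$ is not scattered and matches the configuration $i)$--$iii)$ of Section \ref{sec:F_3}: a unique point $P$ of weight $2$ and a unique plane $\pi$ of weight $5$ with $P\in\pi$, placing $\cD_{AB}$ in $\cF_3$. The essential and most laborious step is the type: using the polarity $\perp$ of $\cQ$ I would compute the weights of $P^\perp$ and $\pi^\perp$ in $L_{\cD_{AB}}$, expecting $3$ and $0$ respectively, so that $\cD_{AB}$ is of type $(3,0)$. Newness is then immediate from invariance, for class membership is an isotopism invariant and, by \cite[Theorem 3.2]{JoMaPoTr2011}, so is the type inside $\cF_3$; every known odd-order presemifield in $\cF_3$ is of type $(4,1)$, the only type-$(3,0)$ examples (the Huang--Johnson semifields $\S_{IV},\S_V$) having order $2^6$, so no $\cD_{AB}$ of type $(3,0)$ can be isotopic to a previously known semifield.

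The main obstacle I anticipate is pinning down the type in part $(i)$: securing the exact weights $3$ and $0$ of $P^\perp$ and $\pi^\perp$ amounts to counting $\F_q$--rational dimensions of intersection spaces whose defining equations involve $H_{b,-r}^{-1}$ together with the normalization $N_{q^3/q}(b^2)=-1$, and keeping the implicit map under control through these rank computations is where the argument is most delicate. In part $(ii)$ the analogous, milder difficulty is confirming that the polar-intersection count is exactly one rather than zero; here one must also check, en route, that both transversals are external to $\cQ$, although for the newness conclusion the count $|t_1^\perp\cap t_2|=1$ alone suffices.
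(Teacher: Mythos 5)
Your plan follows essentially the same route as the paper's proof: the same dichotomy on $N_{q^3/q}(b^2)$ governing scatteredness, the same placement in $\cF_3$ with type $(3,0)$ determined by the weights of $P^\perp$ and $\pi^\perp$, the same exhibition of the transversals via a strictly semilinear map and Theorem \ref{thm:algebraicpseudoregulus} giving $|t_1^\perp\cap t_2|=1$, and the same newness conclusions obtained by comparing these isotopism invariants against the known examples in $\cF_3$ (all of type $(4,1)$ in odd order) and in $\cF_5$. The one notable difference concerns the ``technical nuisance'' you flag: the paper never inverts $H_{b,-r}$ explicitly, but instead reparametrizes the linear set by substituting $x\mapsto H_{b,-r}(x)$, so that the coordinate $\xi B_{b,-r}(x)$ becomes simply $\xi(x+bx^{q^{-r}})$ and all subsequent weight and transversal computations involve binomials rather than the trinomial your inversion formula would produce.
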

\begin{proof}
The $\F_q$--linear set
$\P=PG(3,q^3)$ associated with a presemifield $\cD_{AB}$ is
$$L_{\cD_{AB}}=\{\langle (x,y,y^{q^r}-b^2y^{q^{-r}},\xi (2H_{b,-r}^{-1}(x)-x))\rangle\, : \, x,y \in \F_{q^3}, (x,y) \neq (0,0)\}$$
and it is disjoint from the hyperbolic quadric $\cQ:
X_0X_3-X_1X_2=0$. Also, the two lines $s:X_1=X_2=0$ and
$s^{\perp}: X_0=X_3=0$ of $\P$ have both weight 3 in $L_{\cD_{AB}}$. Since $N_{q^3/q}(b) \neq 1$, the map $H_{b,-r}(x)$ is invertible and
hence $L_{\cD_{AB}}$ can be rewritten in the following fashion:
$$L_{\cD_{AB}}=\{\langle (x-bx^{q^{-r}},y,y^{q^r}-b^2y^{q^{-r}},\xi(x+bx^{q^{-r}}))\rangle \, : \, x,y \in \F_{q^3}, (x,y) \neq (0,0)\}.$$

$i)$\quad If $N_{q^3/q}(b^2)=-1$ (which implies $q\equiv 1(mod\,4)$), by
\cite[Theorem 4.8 $(iii)$]{LMPT}, $L_{\cD_{AB}}$ is
not scattered and it contains a unique point, say $P$, of weight
2. Hence, by \cite[Section $4$]{MaPoTr2007}, the presemifields $\cD_{AB}$
belong to the family $\cF_3$. Moreover, by the proofs of
\cite[Lemma 4.4, Theorems 4.5 and 4.8]{LMPT}, such a point is on
the line $s^\perp$ and it has coordinates $P\equiv (0,\bar y,\bar
y^{q^r}-b^2\bar y^{q^{-r}},0)$, where $\bar y$ is a solution of
the equation
$$x^{q^{2r}-1}=-\frac{b^{2q^r}}{(\lambda-\lambda^{q^r})^{q^r-1}},$$
with $\lambda$ a given element of $\F_{q^3}\setminus\F_q$. Since
the line $s$ has weight 3 in $L_{\cD_{AB}}$, the plane
$\pi=\langle P,s\rangle$ is the unique plane of $\P$ of weight 5
in $L_{\cD_{AB}}$ (by the first two properties of Section \ref{sec:F_3}). Also, $P^\perp$ is the plane with equation $\bar
yX_2+(\bar y^{q^r}-b^2\bar y^{q^{-r}})X_1=0$ and, taking into
account that $N_{q^3/q}(b^2)=-1$, straightforward computations
show that $$L_{\cD_{AB}}\cap P^\perp=L_{\cD_{AB}}\cap s.$$ It
follows that $w_{L_{\cD_{AB}}}(P^\perp)=w_{L_{\cD_{AB}}}(s)=3$ and,
since $\pi^\perp=P^\perp\cap s^\perp$ (and hence $\pi^\perp\notin
s$), we get $w_{L_{\cD_{AB}}}(\pi^\perp)=0$. This implies that,
when $N_{q^3/q}(b^2)=-1$, each presemifield $\cD_{AB}$ belongs to
the family $\cF_3$ and it is of type $(3,0)$. The only known
presemifields in the literature of order $q^6$, with $q$ odd,
belonging to the family $\mathcal{F}_3$ are those described in
\cite[Proposition 3.8 \,and\,Theorem 3.9]{JoMaPoTr2011}. However all
presemifields in this infinite family are of type $(4,1)$. This
implies that when $N_{q^3/q}(b^2)=-1$, the presemifields of the family
$\cD_{AB}$ are all new.

\medskip

$ii)$\quad Let now $N_{q^3/q}(b^2)\ne -1$. Then, by \cite[Theorem 4.8
$(i)$]{LMPT}, the linear set $L_{\cD_{AB}}$ is a maximum scattered
linear set of $\P$ of pseudoregulus type, and hence the
presemifields $\cD_{AB}$ belong to the family $\cF_5$. Let $t_1$ and $t_2$ be the lines of $\P$ with equations
$$t_1:\ \left\{
\begin{array}{l}
X_2=b^{-2q^{-r}}X_1\\
X_3=\xi X_0
\end{array}
\right.\quad\quad\quad t_2:\ \left\{
\begin{array}{l}
b^{-2q^{r}}X_2=-b^{2}X_1\\
X_3=-\xi X_0.
\end{array}
\right.$$
Since $q$ is odd, $\xi$ is a nonsquare in $\F_q$ and
$N_{q^3/q}(b^2)\notin\{-1,1\}$, these two lines are disjoint, both
external to the quadric $\cQ$ and $t_1^\perp\cap t_2$ is the point
$\langle(1,0,0,-\xi)\rangle$.

Let  $\Phi_f$ be the collineation between $t_1$ and $t_2$ induced by the strictly semilinear map
$$f:\ (x,y,b^{-2q^{-r}}y,\xi x)\mapsto\ (-bx^{q^{-r}},b^{-2q^{r}}y^{q^{-r}},-b^2y^{q^{-r}},\xi b x^{q^{-r}}),$$
direct computations show that
$$\{\langle
(x,y,b^{-2q^{-r}}y,\xi x)+f((x,y,b^{-2q^{-r}}y,\xi x)) \rangle
\,:\, x,y \in \F_{q^3},\  (x,y)\neq (0,0)\}\quad\quad$$
$$= \{\langle
(x-bx^{q^{-r}},y+b^{-2q^{r}}y^{q^{-r}},b^{-2q^{-r}}y-b^2y^{q^{-r}},\xi (x+b x^{q^{-r}})) \rangle
\,:\, x,y \in \F_{q^3},\  (x,y)\neq (0,0)\}$$
$$= \{\langle
(x-bx^{q^{-r}},z,z^{q^r}-b^2z^{q^{-r}},\xi (x+b x^{q^{-r}})) \rangle
\,:\, x,z \in \F_{q^3},\  (x,z)\neq (0,0)\}=L_{\cD_{AB}}.$$

Then by Theorem \ref{thm:algebraicpseudoregulus}, the lines $t_1$ and $t_2$ are the two
transversals of the $\F_q$--pseudoregulus associated with
$L_{\cD_{AB}}$. Hence, when $N_{q^3/q}(b^2)\ne -1$,
presemifields $\cD_{AB}$ belong to the class $\mathcal{F}_5$
and provide the first examples in the literature of presemifields
in class ${\cal F}_5$ such that the transversals of the associated
$\F_q$--pseudoregulus,  satisfy the geometric condition $|t_1^\perp\cap t_2|=1$. Since the size of this intersection is an isotopism invariant, taking into account the known examples of presemifields in the family $\cF_5$ listed in Section 2, we can conclude that the presemifields $\cD_{AB}$, with $N_{q^3/q}(b^2)\ne -1$, are all new.
\end{proof}

Finally, we want to point out the following result.

\begin{theorem}\label{rem}
The presemifields $\cD_{AB}$ and each new presemifield $\cD_{A}$ are not the rank two derivatives of any known presemifield.
\end{theorem}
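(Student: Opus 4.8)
The plan is to prove Theorem \ref{rem} by exploiting the invariance of the geometric configuration data under the two rank-two derivative operations (transpose and translation dual), and comparing against the invariants already computed for $\cD_A$ and $\cD_{AB}$ in Theorems \ref{thm:D_A-new} and \ref{thm:D_AB-new}. The key observation is that being a rank-two derivative is symmetric: $\bS$ is a rank-two derivative of $\bS'$ precisely when $\bS'$ is a rank-two derivative of $\bS$. So it suffices to show that neither the transpose $\cD^t$ nor the translation dual $\cD^\perp$ of each relevant presemifield $\cD$ (where $\cD$ is a new $\cD_A$ or any $\cD_{AB}$) is isotopic to a \emph{previously known} semifield. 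Since Theorems \ref{thm:D_A-new} and \ref{thm:D_AB-new} already establish that $\cD$ itself is new, what remains is to rule out the possibility that one of its two derivatives coincides with a known example.

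First I would recall that the class membership $\cF_i$ and the finer type data are governed by the position of the associated linear set $L_\bS$ relative to the quadric $\cQ$, and that these are preserved in a controlled way under the two operations. Concretely, the transpose corresponds to applying an element of $\mathrm{P\Gamma O}^+(4,q^3)\setminus\cG$, which swaps the two reguli of $\cQ$ but preserves weights of points, lines, and planes, and it preserves the scattered/non-scattered dichotomy; hence it preserves the class $\cF_i$. The translation dual, defined via the bilinear form $Tr_{q^3/q}(b(X,Y))$, produces $L_\bS^\perp$, which likewise has a determined class. So for each case I would track the invariants. For a new $\cD_A$ presemifield ($N_{q^3/q}(a)\notin\{-1,1\}$), Theorem \ref{thm:D_A-new} places it in $\cF_5$ of type $\bS(\bar\lambda,0,\bar\alpha,0,\bar\sigma)$ with both transversals external and $t_1^\perp\cap t_2=\emptyset$; for $\cD_{AB}$ with $N_{q^3/q}(b^2)\ne -1$ we are again in $\cF_5$ but with $|t_1^\perp\cap t_2|=1$, while for $N_{q^3/q}(b^2)=-1$ we land in $\cF_3$ of type $(3,0)$.

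Next I would compute the derivatives' invariants explicitly. For the $\cF_5$ cases I would use that the transpose sends a presemifield of type $\bS(\lambda,0,\alpha,0,\sigma)$ to another $\cF_5$ presemifield whose transversal-intersection invariant $|t_1^\perp\cap t_2|$ is determined by the original configuration, and that the translation dual's effect on this same invariant can be read off from the action of $\perp$ on the transversal lines. The point is that the pair of invariants (class $\cF_i$, and the value of $|t_1^\perp\cap t_2|$ together with the external/internal position of the transversals, or the type $(i,j)$ in the $\cF_3$ case) is preserved or predictably transformed, so each derivative again lies in a configuration \emph{not matching} any known example listed in Section 2. In particular, since the only known $\cF_5$ examples of odd order are $\S_1,\S_2$ and those of \cite{LMPT}, \cite{MaPoTr2011}, \cite{Knuth1965}, \cite{Albert1960} — all with $t_1^\perp\cap t_2=\emptyset$ or with transversals on the quadric — a derivative whose invariant is $|t_1^\perp\cap t_2|=1$ cannot be any of them; and for the $\cF_3$ type-$(3,0)$ situation, the only known odd-order $\cF_3$ examples are of type $(4,1)$, so any derivative staying in $\cF_3$ of type $(3,0)$ is again new.

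The main obstacle will be the translation-dual computation: unlike the transpose, which is a clean collineation swapping reguli, the translation dual is defined through field reduction and the trace-of-bilinear-form, and extracting the precise geometric configuration of $L_\bS^\perp$ (its class, and especially the intersection invariant of its transversals) requires genuine coordinate work rather than a symmetry argument. I expect this to be where the real effort lies, and I would handle it by writing $L_{\cD}^\perp$ explicitly from the defining linear set, determining whether it is scattered, and locating its points and planes of anomalous weight; the hope is that the computation yields the same discriminating invariant as $\cD$ itself (or a closely related one), so that each derivative is forced into a configuration absent from the known list, completing the argument for all cases simultaneously.
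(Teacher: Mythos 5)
Your setup is sound (both operations are involutory up to isotopy, so the symmetry reduction is legitimate), and your configuration-invariance argument is essentially the paper's own proof for the $\cD_{AB}$ cases: type $(3,0)$ in $\cF_3$ and the invariant $|t_1^\perp\cap t_2|=1$ in $\cF_5$ are preserved under transpose and translation dual, and no known example attains either configuration, so $\cD_{AB}$ cannot be a rank two derivative of a known presemifield.

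However, there is a genuine gap in your treatment of the new $\cD_A$ presemifields, and it cannot be repaired along the lines you propose. A new $\cD_A$ lies in $\cF_5$ with both transversals external to $\cQ$, not pairwise polar, and $t_1^\perp\cap t_2=\emptyset$ --- which is \emph{exactly} the configuration of the known examples $\bS_1$ and $\bS_2$ of \cite{LMPT} (item 4 of the list in Section 2). Consequently the derivatives $\bS_1^t$, $\bS_2^t$, $\bS_1^\perp$, $\bS_2^\perp$ all carry the very same configuration as $\cD_A$, and no argument of the form ``each derivative is forced into a configuration absent from the known list'' can distinguish them; your proposed explicit computation of $L_{\cD_A}^\perp$ would only reproduce this non-discriminating data. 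This is precisely why Theorem \ref{thm:D_A-new} already needed a finer isotopy invariant, namely the norm condition of Lemma \ref{lm:NecessaryCond}, and the paper reuses it here. For the transpose, the paper invokes the nucleus swap of \cite{Maduram1975}: $\cD_A^t$ has middle and right nuclei of orders $q^2$ and $q$, and since the family ${\mathbb{S}}(\lambda,\mu,\alpha,\beta,\sigma)$ is closed under transposition, an isotopy $\cD_A\cong\bS^t$ with $\bS$ known forces $\cD_A^t\cong\bS_i^t$, hence $\cD_A\cong\bS_i$, contradicting Theorem \ref{thm:D_A-new}. For the translation dual, since that operation preserves the parameters and the family is closed under it, the only candidates are $\bS_i^\perp$; the paper computes $\bS_i^\perp\cong{\mathbb{S}}(\lambda_i,0,\lambda_i^q,0,q^4)$ and then applies Lemma \ref{lm:NecessaryCond}, i.e. $N_{q^6/q^3}(\bar\lambda^\tau)=N_{q^6/q^3}(\lambda_i)$, to rule this out by the same norm computations as in Theorem \ref{thm:D_A-new}. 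Without this norm-based invariant (or an equivalent substitute finer than the geometric configuration), your plan cannot close the $\cD_A$ case.
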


\begin{proof}
The geometric properties defining presemifields of type $(3,0)$ in family $\cF_3$ are invariant under the transpose and the translation dual operations (\cite[Theorems 3.2 and 3.3]{JoMaPoTr2011}). Moreover, the geometric configurations (with respect to the quadric $\cQ$) of the transversals of the pseudoregulus associated with one of the known semifields in class $\cF_5$ (listed in Section 2) are invariant under the above mentioned operations. This means that 6--dimensional $\cD_{AB}$ presemifields are isotopic neither to the transpose nor to the translation dual of any know presemifield.

The transpose of each presemifield $\cD_A$ has middle and right nuclei of sizes $q^2$ and $q$, respectively (\cite{Maduram1975}). Since the family of presemifields of type ${\mathbb{S}}(\lambda,\mu,\alpha,\beta,\sigma)$ is closed under the transpose operation (\cite[Theorem 3.3]{LMPT}) and since we have proven that the family $\cD_{A}$ is contained, up to isotopy, in such a family, the transpose $\cD_A^t$ of a new presemifield  $\cD_A$ could only be isotopic to one of the odd order examples constructed in \cite[Thorems 5.3 and 5.8]{LMPT} having the same parameters, i.e. it could only be isotopic to either $\bS_1^t$ or $\bS_2^t$, in which case $\cD_A$ would be isotopic to either $\S_1$ or $\S_2$. But in both cases Theorem \ref{thm:D_A-new} proves that this is not the case. Finally, note that the family of presemifields of type ${\mathbb{S}}(\lambda,\mu,\alpha,\beta,\sigma)$ is closed under the translation dual operation (\cite[Theorem 3.3]{LMPT}) and such an operation leaves invariant the parameters of a presemifield (\cite[Theorem 5.3]{LuMaPoTr2008}). Hence, we only have to compare each new presemifield $\cD_{A}$ with the translation duals of $\bS_1$ and $\bS_2$. Since the translation dual of $\bS_i={\mathbb{S}}(\lambda_i,0,\lambda_i,0,q^2)$ is, up to isotopy, the semifield $\bS_i^\perp={\mathbb{S}}(\lambda_i^{q^3},0,\lambda_i^q,0,q^4)$ and this is isotopic to ${\mathbb{S}}(\lambda_i,0,\lambda_i^q,0,q^4)$ (\cite[Theorem 3.3]{LMPT}), the same arguments as in the proof of Theorem \ref{thm:D_A-new} prove that each new presemifield $\cD_A$ is also new with respect to the translation dual operation.
\end{proof}

\bigskip

\bigskip

\noindent  Michel Lavrauw\\
Department of Management and Engineering,\\
Universit\`a di Padova,\\
I--\,36100 Vicenza, Italy\\
{\em michel.lavrauw@unipd.it}

\medskip

\noindent Giuseppe Marino and Olga Polverino\\
Dipartimento di Matematica e Fisica,\\
 Seconda Universit\`a degli Studi
di Napoli,\\
I--\,81100 Caserta, Italy\\
{\em giuseppe.marino@unina2.it}, {\em olga.polverino@unina2.it}

\medskip
\noindent  Rocco Trombetti\\
Dipartimento di Matematica e Applicazioni "R. Caccioppoli",\\
Universit\`a degli Studi di Napoli ``Federico II'',\\
 I--\,80126 Napoli, Italy\\
{\em rtrombet@unina.it}

\end{document}